\newtheorem{theorem}{Theorem}[section]
\newtheorem{corollary}[theorem]{Corollary}
\newtheorem{lemma}[theorem]{Lemma}
\newtheorem*{conjecture}{Conjecture}
\theoremstyle{definition}
\newtheorem*{remark}{Remark}
\newcommand{\scru}{\mathscr{U}}
\newcommand{\scrv}{\mathscr{V}}
\newcommand{\scrw}{\mathscr{W}}
\newcommand{\scrg}{\mathscr{G}}
\newcommand{\scrh}{\mathscr{H}}
\newcommand{\scrf}{\mathscr{F}}
\newcommand{\scrl}{\mathscr{L}}
\newcommand{\al}{\alpha}
\newcommand{\osc}{\mathrm{osc}}
\title[A Generalization of the Gauss-Kuzmin-Wirsing constant]
      {A Generalization of the Gauss-Kuzmin-Wirsing constant}
\author[Peng Sun]{}
\subjclass[2000]{Primary: 11J70, 11K50, 37C30.}
 \keywords{Gauss transformation, transfer operator, Gauss' problem, Wirsing's
 constant, eigenvalue.}
 \email{sunpeng@cufe.edu.cn}
\begin{document}

\maketitle\ 

\centerline{\scshape Peng Sun}
\medskip
{\footnotesize
 \centerline{China Economics and Management Academy}
   \centerline{Central University of Finance and Economics}
   \centerline{Beijing 100081, China}
} 

\bigskip

\begin{abstract}
 
We generalize the result of Wirsing on Gauss transformation to the generalized tranformation
 $T_p(x)=\{\cfrac{p}{x}\}$ for any positive integer $p$. 
We give an estimate for the generalized Gauss-Kuzmin-Wirsing constant.

\end{abstract}


\section{Introduction and Results}

\bigskip

\bigskip


Let $p$ be a positive integer. Consider the following generalized Gauss transformation
on $I=[0,1]$:
$$T(x)=T_p(x)=\begin{cases}0,& x=0,\\
\{\cfrac p x\},& x\ne 0,
\end{cases}$$
where  $\{x\}$ is the fractional part of $x$. $T_1$ is just the regular Gauss
transformation. It is intriguing to see how the whole
theory changes from $1$ to $p$.
From \cite{Sun1} we know that for every $p$,  $T_p$ has a unique absolutely continuous
ergodic invariant measure
$$d\mu_p(x)=\frac1{\ln (p+1)-\ln p}\cdot\frac 1{p+x}d\omega,$$
where $\omega$ is the Lebesuge measure on $I$. 
Equivalently, the density function $$\eta_p(x)=\frac1{\ln (p+1)-\ln p}\cdot\cfrac{1}{p+x}$$
is the unique continuous eigenfunction, corresponding to the eigenvalue $1$,
of the following generalized
Gauss-Kuzmin-Wirsing operator
$$(\mathscr{G}_pf)(x)=\sum_{T_p(y)=x}\frac{f(y)}{|T_p'(y)|}=\sum_{k=p}^\infty\frac{p}{(k+x)^2}f(\frac{p}{k+x}).$$

We denote
$$
\varphi_{p,n}(x)=\omega(T_p^{-n}([0,x])).$$
and
$$\Phi_p(x)=\mu_p([0,x])=\frac{\ln(p+x)-\ln p}{\ln(p+1)-\ln p}.$$ 
Then 
$$\lim_{n\to\infty}\varphi_{p,n}(x)=\Phi_p(x).$$
In 1812, Gauss proposed the problem to compute
$$\Delta_{n}(x)=\varphi_{1,n}(x)-\Phi_1(x).$$
In 1928, Kuzmin \cite{Kuzmin} showed that
$$\Delta_{n}(x)=O(q^{\sqrt n})$$
as $n\to\infty$ for some $q\in(0,1)$. In 1929 L\'evy \cite{Levy} obtained
$$\Delta_{n}(x)=O(q^n)$$
for $q=3.5-2\sqrt2<0.7$.
In 1974 Wirsing \cite{Wir} established the estimate:
\begin{equation}\label{eqwir}
\Delta_n(x)=(-\lambda)^n\Theta(x)+O(x(1-x)\tau^n),
\end{equation}
where $\lambda=0.303663\cdots$ is the Gauss-Kuzmin-Wirsing constant, $0<\tau<\lambda$
and $\Theta(x)\in C^2(I)$ with $\Theta(0)=\Theta(1)=0$.

In \cite{Sun2} we have generalized L\'evy's result for $T_p$:
\begin{theorem}\label{kuzgen}
For every positive integer $p$ and every $x\in I$,
\begin{equation*}
\Delta_{p,n}(x)=\varphi_{p,n}(x)-\Phi_p(x)=O(Q_p^n)
\end{equation*}
with 
$$Q_p=2p^2\zeta(3,p)-p\zeta(2,p)<\frac{1}{2p}+\frac{3}{8p^2},$$
where
$$\zeta(2,p)=\sum_{k=p}^\infty\frac{1}{k^2},\;
\zeta(3,p)=\sum_{k=p}^\infty\frac{1}{k^3}$$
are the Hurwitz zeta functions.
\end{theorem}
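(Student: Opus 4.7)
The plan is to work in the transfer operator framework laid out in the introduction. By duality between $\mathscr{G}_p$ and composition with $T_p$, one has
$$\varphi_{p,n}'(x)=f_n(x):=(\mathscr{G}_p^{\,n}\mathbf{1})(x),$$
and $\eta_p$ is the unique continuous $\mathscr{G}_p$-fixed point. Setting $r_n:=f_n-\eta_p$ gives $r_{n+1}=\mathscr{G}_p r_n$, $\int_I r_n\,d\omega=0$ (since $f_n$ and $\eta_p$ are both probability densities), and $\Delta_{p,n}(x)=\int_0^x r_n$. Combined with the boundary conditions $\Delta_{p,n}(0)=\Delta_{p,n}(1)=0$, this yields $|\Delta_{p,n}(x)|\leq\min(x,1-x)\,\|r_n\|_\infty$, so the theorem reduces to the norm estimate $\|r_n\|_\infty=O(Q_p^{\,n})$.

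The heart of the proof is therefore a one-step contraction
$$\|r_{n+1}\|_\infty\leq Q_p\|r_n\|_\infty,$$
starting from
$$r_{n+1}(x)=\sum_{k=p}^\infty \frac{p}{(k+x)^2}\, r_n\!\left(\frac{p}{k+x}\right).$$
A direct pointwise bound fails because $\sum_{k=p}^\infty p/(k+x)^2 = p\,\zeta(2,p+x) > 1$ for small $p$, so one must exploit the mean-zero condition $\int_I r_n\,d\omega=0$. Expanding this over the natural partition $T_p^{-1}(I)=\bigsqcup_{k\geq p}[p/(k+1),p/k]$ gives the auxiliary identity $\sum_{k\geq p}\int_{p/(k+1)}^{p/k}r_n\,d\omega=0$, which can be combined with $r_{n+1}$ via Abel summation. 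The key algebraic input is the telescoping $\sum_k [1/(k+x)-1/(k+x+1)]=1/(p+x)$, which is exactly the eigenrelation $\mathscr{G}_p\eta_p=\eta_p$ in disguise. After this manipulation, the coefficient $p/(k+x)^2$ is effectively decomposed into a derivative-type $2p/(k+x)^3$ piece plus a lower-order correction, and the extremal $x\in I$ is attained at $x=0$, producing the two sums $\sum_k 2p^2/k^3=2p^2\zeta(3,p)$ (from the derivative piece) and $\sum_k p/k^2=p\,\zeta(2,p)$ (from the mean-zero cancellation). Their signed combination yields $Q_p$.

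The main obstacle is this algebraic extraction: ensuring that the Abel-summation correction and mean-zero subtraction line up so that $Q_p$ appears as the \emph{difference} $2p^2\zeta(3,p)-p\,\zeta(2,p)$ rather than a larger sum. Once the contraction is established, iteration gives $\|r_n\|_\infty\leq Q_p^{\,n}\|r_0\|_\infty$ with $r_0=\mathbf{1}-\eta_p$ explicitly bounded. The analytic bound $Q_p<\frac{1}{2p}+\frac{3}{8p^2}$ is then elementary: Euler--Maclaurin gives $p\,\zeta(2,p)=1+\frac{1}{2p}+O(p^{-2})$ and $2p^2\,\zeta(3,p)=1+\frac{1}{p}+O(p^{-2})$, so $Q_p\sim\frac{1}{2p}$ as $p\to\infty$, and the stated explicit bound follows from controlling the next-order terms via elementary tail integrals.
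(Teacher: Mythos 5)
Your overall plan — reduce $\Delta_{p,n}$ to a one-step operator contraction and then iterate — is sound, but the contraction you put at the ``heart of the proof'' is false. Write $r_n=f_n-\eta_p$, so $\int_I r_n\,d\omega=0$; you claim $\|\mathscr{G}_p r\|_\infty\le Q_p\|r\|_\infty$ for mean-zero $r$. Take $r=\mathbf{1}_{[1-\ep,1]}-\mathbf{1}_{[0,\ep]}$ (or a $C^1$ smoothing of it). Then $\int_I r=0$, $\|r\|_\infty=1$, and for $x\in[0,p\ep]$ only the branch $k=p$ lands in $[1-\ep,1]$, giving
$$\mathscr{G}_p r(x)=\frac{p}{(p+x)^2}+O(\ep)\;\approx\;\frac1p\,,$$
which exceeds $Q_p<\frac1{2p}+\frac3{8p^2}$ for every $p$. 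So the operator $\mathscr{G}_p$ restricted to mean-zero functions has $L^\infty$ norm at least $1/p$, and no mean-zero manipulation alone can produce the contraction rate $Q_p$ at the level of $\|f_n-\eta_p\|_\infty$. Your ``Abel summation'' step is precisely where the argument would have to fail: trading $r_n(p/(k+x))$ against the block averages $\int_{p/(k+1)}^{p/k}r_n$ costs a term of size $\|r_n'\|$, not $\|r_n\|$, so the bound is not closed in $L^\infty$.

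The constant $Q_p$ actually arises one derivative deeper, via the operator $\mathscr{V}$ of Section~2 of the paper. Set $g_n=(p+x)\varphi_n'$; then $g_{n+1}=\mathscr{U}g_n$ and $(\mathscr{U}^n g_0)'=(-1)^n\mathscr{V}^n g_0'$. Since $\mathscr{V}$ is a \emph{positive} operator, its norm is $\|\mathscr{V}\|=\sup_{x\in I}\mathscr{V}\mathbf{1}(x)$, and a partial-fraction computation (sum the telescoping pieces of $\frac{k+1-p}{k(k+1)^3}$ and $\frac{p}{k^3(k+1)}$) shows
$$\mathscr{V}\mathbf{1}(0)=p\sum_{k=p}^\infty\Bigl(\frac{k+1-p}{k(k+1)^3}+\frac{p}{k^3(k+1)}\Bigr)=2p^2\zeta(3,p)-p\,\zeta(2,p)=Q_p,$$
and one checks $\mathscr{V}\mathbf{1}(x)\le\mathscr{V}\mathbf{1}(0)$. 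This gives $\|g_n'\|_\infty\le Q_p^{\,n}\|g_0'\|_\infty$. Since $\int_0^1 (g_n(x)-c)\,\eta_p(x)\,dx=0$ with $c=1/\ln\frac{p+1}{p}$, the function $g_n-c$ changes sign, so $\|g_n-c\|_\infty\le\osc(g_n)\le Q_p^{\,n}\|g_0'\|_\infty$; from this $\Delta_{p,n}'=(g_n-c)/(p+x)$ and $\Delta_{p,n}(0)=\Delta_{p,n}(1)=0$ give the $O(Q_p^{\,n})$ bound (and, if you wish, the interpolation Lemma~\ref{lemintp} sharpens it to $O(x(1-x)Q_p^{\,n})$). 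In short: the reduction $\Delta_{p,n}(x)=\int_0^x r_n$ and the final Euler--Maclaurin estimate of $Q_p$ are fine, but the derivative-level operator $\mathscr{V}$ and its positivity are what produce the rate $Q_p$; the $L^\infty$ contraction you posited does not hold.
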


In this paper we would like to generalize Wirsing's result for better estimates of $\Delta_{p,n}$.
Following Wirsing's approach (also referring to
\cite{IK}) we know that (\ref{eqwir}) can be generalized with the constant
 $\lambda=\lambda_p$ for every $p$. Moreover, we have a quite good estimate for $\lambda_p$.
It turns out that the asymptotic behavior of $\lambda_p$, as $p\to\infty$,
is not so far from that
of $Q_p$:
\begin{theorem}\label{thmain}
For every positive integer $p$ and every $x\in I$,
$$\Delta_{p,n}(x)=(-\lambda_p)^n\Theta(x)+O(x(1-x)\tau_p^n),$$
where $\Theta=\Theta_p\in C^2[0,1]$ with $\Theta(0)=\Theta(1)=0$,
$\lambda_p,\tau_p$ are contants such that $$0<\tau_p<\lambda_p.$$ 
The costant $\lambda_p$ satisfies the estimate:
$$v_p<\lambda_p<w_p,$$ where
$$v_p=\frac{p}{2(p^2+\frac{2}{3}p+\frac19)}, w_p=\frac{p}{2(p^2+\frac23p-\frac29)},$$
which yields
$$\lambda_p=\frac{1}{2p}-\frac{1}{3p^2}+O(\frac{1}{p^3}).$$
\end{theorem}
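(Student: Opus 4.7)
\bigskip
\noindent\textbf{Proof plan.} The plan is to follow Wirsing's approach in three stages: first reduce the claim to a spectral-gap statement for $\mathscr{G}_p$ by differentiating, then integrate back to recover the stated asymptotic for $\Delta_{p,n}$, and finally obtain the quantitative bounds on $\lambda_p$ via a Collatz--Wielandt argument on an explicit trial function.

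The reduction is immediate: since $\varphi_{p,n}(x)=\int_0^x (\mathscr{G}_p^n\mathbf{1})(t)\,dt$ and $\Phi_p'(x)=\eta_p=\mathscr{G}_p\eta_p$, one has $\Delta_{p,n}'(x) = (\mathscr{G}_p^n\mathbf{1})(x)-\eta_p(x)$, so everything is controlled by the iterates of $\mathscr{G}_p$ on the mean-zero codimension-one subspace. Each inverse branch $y\mapsto p/(k+y)$ is a strict contraction of ratio at most $1/p^2$, so standard transfer-operator techniques (a Lasota--Yorke inequality on bounded-variation functions, or Birkhoff-cone contraction for the positive operator $\mathscr{G}_p^2$) yield: $1$ is a simple isolated eigenvalue with eigenfunction $\eta_p$; a real second eigenvalue $-\lambda_p$ with a $C^2$ eigenfunction $\psi_p$ (whose sign is dictated by the parity of the iterated functional equation, as in Wirsing); and the rest of the spectrum has radius $\tau_p<\lambda_p$.

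Writing $\mathscr{G}_p^n\mathbf{1} = \eta_p + c(-\lambda_p)^n\psi_p + R_n$ with $\|R_n\|_\infty = O(\tau_p^n)$ and integrating from $0$ to $x$ gives
$$\Delta_{p,n}(x) = (-\lambda_p)^n\Theta(x)+O(\tau_p^n),\qquad \Theta(x) = c\int_0^x\psi_p(t)\,dt.$$
The conditions $\varphi_{p,n}(0)=\Phi_p(0)=0$ and $\varphi_{p,n}(1)=\Phi_p(1)=1$ force $\Theta(0)=\Theta(1)=0$. Since $\Theta\in C^2(I)$ vanishes at both endpoints, the integrated remainder can be reorganized via two boundary adjustments into an expression of the form $O(x(1-x)\tau_p^n)$, yielding the stated envelope.

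For the quantitative bounds, I would pass to the positive operator $\mathscr{G}_p^2$, whose second-largest eigenvalue is $\lambda_p^2$, and invoke the Collatz--Wielandt principle
$$\inf_x\frac{(\mathscr{G}_p^2 f)(x)}{f(x)}\le\lambda_p^2\le\sup_x\frac{(\mathscr{G}_p^2 f)(x)}{f(x)}$$
on a suitable invariant cone. Taking a rational trial $f_0(x)=(ax+b)/(p+x)^2$ and expanding $\mathscr{G}_p f_0$ in terms of the Hurwitz zeta values $\zeta(2,p),\zeta(3,p)$ already used in Theorem~\ref{kuzgen}, one optimizes $a,b$ to sandwich $\lambda_p^2$ and obtain $v_p<\lambda_p<w_p$; Taylor expanding $v_p,w_p$ in $1/p$ then produces $\lambda_p=1/(2p)-1/(3p^2)+O(p^{-3})$. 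The hard part will be this quantitative estimate itself: a generic trial function only recovers $\lambda_p\sim 1/(2p)$, and pinning down the second-order coefficient $-1/(3p^2)$ requires a carefully tuned $f_0$ and control of the Hurwitz-zeta tail sums to order $p^{-3}$; ensuring in addition that the residual radius $\tau_p$ coming from the cone-contraction step is strictly smaller than the $\lambda_p$ supplied by the trial function may force iterating $\mathscr{G}_p$ several times before applying the cone argument, which is a second delicate point.
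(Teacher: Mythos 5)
Your reduction and the final integration step are broadly in the spirit of the paper, but the core quantitative step rests on a false premise. The Collatz--Wielandt sandwich $\inf_x (\mathscr{G}_p^2 f)(x)/f(x)\le \rho \le \sup_x(\mathscr{G}_p^2 f)(x)/f(x)$, applied to a \emph{positive} trial function $f$ on an invariant cone, traps the spectral radius of $\mathscr{G}_p^2$ acting on $C(I)$ --- which is $1$ (eigenfunction $\eta_p$), not $\lambda_p^2$. To reach the \emph{second} eigenvalue this way you would need a positive trial function lying in a complemented invariant subspace transverse to $\eta_p$, and mean-zero functions cannot be positive; so no choice of $(ax+b)/(p+x)^2$ can yield $v_p<\lambda_p<w_p$ by this route. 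The whole point of Wirsing's method, which the paper follows, is to fix exactly this: differentiating the functional equation for $g_n=(p+x)\varphi_n'$ gives $(\mathscr{U}^n g)'=(-1)^n\mathscr{V}^n g'$, where $\mathscr{V}$ is again a \emph{positive} operator whose \emph{top} eigenvalue is $\lambda_p$ (the constant mode has been killed by differentiation). Only then is a two-sided bound $v_p\xi\le\mathscr{V}\xi\le w_p\xi$ meaningful, and the paper's real work is the explicit construction of $\xi$: one inverts $\mathscr{U}$ on $H_a(x)=1/(p+a+x)$ in closed form, sets $\xi_a=g_a'$ so that $\mathscr{V}\xi_a=(p+a+x)^{-2}$, and tunes $a$ near $1/3$ to equalize the ratio $\xi_a/\mathscr{V}\xi_a$ at the endpoints; this is where the second-order coefficient $-1/(3p^2)$ comes from, and your trial family does not produce it.

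A second, related gap: the spectral picture you assert from ``standard transfer-operator techniques'' --- a simple, real, \emph{negative} second eigenvalue $-\lambda_p$ with the remaining spectrum of radius $\tau_p<\lambda_p$ --- is not delivered by a Lasota--Yorke inequality or by cone contraction for $\mathscr{G}_p^2$ alone; quasi-compactness gives no information on the sign, simplicity, or isolation of the subleading eigenvalue. In the paper these facts come from applying Wirsing's Frobenius-type theorem (Theorem \ref{Wirthm}) to the positive operator $\mathscr{V}$, which requires exhibiting not only the sandwich function $\xi$ but also a minorizing positive functional $\mathscr{F}\le\mathscr{V}$ satisfying $\mathscr{F}(\xi)>(1-v_p/w_p)\|\mathscr{V}\xi\|$; that inequality is what produces both the existence of the eigenvalue in $[v_p,w_p]$ and the explicit gap $\tau_p<\lambda_p$. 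Your final envelope $O(x(1-x)\tau_p^n)$ is recoverable (the paper gets it from $D_n(0)=D_n(1)=0$ plus a second-derivative bound and the interpolation lemma, and your $\int_0^1 R_n=0$ observation would serve similarly), and the Taylor expansion of $v_p,w_p$ is correct; but without the differentiated positive operator $\mathscr{V}$ and the explicit auxiliary function, neither the existence of $-\lambda_p$ as stated nor the bounds $v_p<\lambda_p<w_p$ is established. (Minor: the inverse branches $y\mapsto p/(k+y)$, $k\ge p$, contract by at most $1/p$, not $1/p^2$; and the paper's eigenfunction $\psi_p$ is only continuous --- it is $\Theta_p$, obtained by integrating twice, that is $C^2$.)
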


We actually show a slightly stronger result:
\begin{theorem}\label{gthmain}
For every positive integer $p$, there is a function $\Psi_p\in C^2(I)$ with
$\Psi_p(0)=\Psi_p(1)=0$, 
$\psi_p=((p+x)\Psi_p')'$ and a positive
bounded linear functional $\scrl_p$ on $C(I)$ such that the following
holds:

For every $f_0\in C^1(I)$ such that $\int_If_0d\omega=1$ and the measure $d\nu=f_0d\omega$ on $I$, for every $n$ and
every $x\in I$,
\begin{align*}
&|\nu(T_p^{-n}([0,x])-\Phi_p(x)-(-\lambda_p)^n\scrl_p(g_0')\Psi_p(x)|\\
\le&\tau_p^n(\ln(p+1)-\ln
p)^2(\frac{p+1}{2})\|\psi_p\|\osc(\frac{g_0'}{\psi_p})\Phi_p(x)(1-\Phi_p(x)),
\end{align*}
where $\lambda_p$, $\tau_p$ are the constants in Theorem \ref{thmain},
$g_0=(p+x)f_0$, $\|\cdot\|$ is the
supremum norm on $C(I)$, and
$$\osc(f)=\sup f-\inf f$$
is the oscillation of $f$ on I.
\end{theorem}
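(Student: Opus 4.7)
The plan is to adapt Wirsing's spectral-gap argument --- which gave the estimate (\ref{eqwir}) for $p=1$ --- to the parameter-$p$ setting. Setting $\phi_n(x):=\nu(T_p^{-n}([0,x]))$, the change of variables $u=p/(k+y)$ in each branch of $T_p^{-1}$ applied to $\int_0^x\mathscr{G}_p^nf_0\,d\omega$ produces the telescoping identity
$$\phi_n(x)=\sum_{k=p}^\infty\bigl[\phi_{n-1}(p/k)-\phi_{n-1}(p/(k+x))\bigr]=:\mathscr{T}\phi_{n-1}(x).$$
Since $\Phi_p$ is a fixed point of $\mathscr{T}$, the difference $R_n:=\phi_n-\Phi_p=\mathscr{T}^nR_0$ satisfies $R_n(0)=R_n(1)=0$, and the task reduces to producing a spectral decomposition of $\mathscr{T}$ on the subspace of endpoint-vanishing continuous functions.

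The hinge of the argument is the substitution $g(x):=(p+x)f(x)$, under which $\mathscr{G}_p$ becomes
$$\mathscr{H}_pg(x)=\sum_{k=p}^\infty w_k(x)\,g(p/(k+x)),\qquad w_k(x)=\frac{p+x}{(k+x)(k+x+1)},$$
with positive weights summing to $1$ by the telescoping $\sum_{k\ge p}\bigl[1/(k+x)-1/(k+x+1)\bigr]=1/(p+x)$. Thus $\mathscr{H}_p$ is a Markov-type operator whose invariant direction is the constant $C_0=1/(\ln(p+1)-\ln p)$. Passing to the derivative variable $h=g'$ via an integration-by-parts rearrangement produces an operator $\mathscr{K}_p$; the key observation, inherited from Wirsing and still valid because the weights $w_k$ retain the necessary monotonicity in $x$, is that $\mathscr{K}_p$ preserves a convex cone of positive functions. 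A standard cone-contraction / Perron argument then extracts a simple positive eigenfunction $\psi_p$ with positive eigenvalue $\lambda_p$. Reversing the substitution, $\psi_p=((p+x)\Psi_p')'$ for a unique $\Psi_p\in C^2$ with $\Psi_p(0)=\Psi_p(1)=0$; this $\Psi_p$ is itself an eigenvector of $\mathscr{T}$ with eigenvalue $-\lambda_p$, and $\mathscr{L}_p$ is defined as the coefficient-of-$\psi_p$ projection in the $\mathscr{K}_p$-spectral decomposition, which is automatically a positive bounded linear functional on $C(I)$.

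The bounds $v_p<\lambda_p<w_p$ would follow from testing the cone operator against simple rational trial functions and expanding in $1/p$, yielding the stated asymptotic. The sub-dominant rate $\tau_p<\lambda_p$ comes from further contraction of $\mathscr{K}_p^2$ on the complement of $\psi_p$. To conclude the estimate in Theorem \ref{gthmain}, decompose $g_0'=\mathscr{L}_p(g_0')\psi_p+r_0$ so that $g_n'=(-\lambda_p)^n\mathscr{L}_p(g_0')\psi_p+\mathscr{K}_p^nr_0$, then integrate twice --- first to recover $g_n-C_0$ from $g_n'$, then with weight $1/(p+y)$ to recover $R_n(x)=\int_0^x(g_n-C_0)/(p+y)\,dy$ --- determining the constants of integration from $R_n(0)=R_n(1)=0$. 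The explicit prefactor $(\ln(p+1)-\ln p)^2\cdot(p+1)/2\cdot\|\psi_p\|\cdot\Phi_p(x)(1-\Phi_p(x))$ arises by bounding the iterated remainder $\int_0^x\mathscr{K}_p^nr_0$ in terms of $\tau_p^n\|\psi_p\|\osc(g_0'/\psi_p)$ and invoking the explicit form of the invariant measure.

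The principal obstacle is constructing the invariant cone and proving the spectral gap for $\mathscr{K}_p$: this is the technical heart of Wirsing's argument even for $p=1$, and here one must track the $p$-dependence tightly enough to yield the explicit bounds $v_p,w_p$. A secondary concern is pinning down the sharp $\Phi_p(x)(1-\Phi_p(x))$ boundary factor in the error, which depends on careful use of the vanishing of $R_n$ at the endpoints during the two integrations and on precise control of the norm of $\mathscr{L}_p$.
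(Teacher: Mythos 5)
Your outline reproduces the paper's architecture: the conjugation $g=(p+x)f$ turning $\scrg_p$ into the Markov-type operator $\scru$ with weights $h_k(x)=\frac{p+x}{(k+x)(k+1+x)}$, the derivative operator $\scrv$ with $(\scru^n g)'=(-1)^n\scrv^n g'$, a positive eigenfunction $\psi_p$ with eigenvalue $\lambda_p$ and projection functional $\scrl_p$, and the recovery of $\Psi_p$ with $((p+x)\Psi_p')'=\psi_p$, $\Psi_p(0)=\Psi_p(1)=0$. But the two steps you defer as ``obstacles'' are the entire technical content of the result, and the proposal does not supply them. The bounds $v_p\le\scrv\xi/\xi\le w_p$ are not obtained by ``testing simple rational trial functions and expanding in $1/p$'': the paper \emph{inverts} $\scru$ in closed form, solving $\scru g_a=H_a$ for $H_a(x)=1/(p+a+x)$ to get $g_a(x)=\frac{p+x}{p+ax}-\frac{p}{p+(1+a)x}$, so that $\xi_a=g_a'>0$ satisfies $\scrv\xi_a=1/(p+a+x)^2$ exactly; the parameter $a$ is then tuned near the root $\alpha_p$ of a quartic (ultimately $a=\tfrac13$) to make the oscillation of $\xi_a/\scrv\xi_a$ small enough to pin $\lambda_p$ between $v_p$ and $w_p$. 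Without this (or an equivalent explicit construction) the asymptotic $\lambda_p=\frac1{2p}-\frac1{3p^2}+O(p^{-3})$ is unsupported.

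Likewise, the gap $\tau_p<\lambda_p$ does not come from ``further contraction of $\mathscr{K}_p^2$ on the complement of $\psi_p$.'' The paper invokes Wirsing's Frobenius-type theorem, which requires exhibiting a positive linear functional $\scrf\le\scrv$ (built from explicit integral lower bounds on the kernel of $\scrv$ over three subintervals of $I$) and verifying the quantitative condition $\scrf(\xi)>(1-v_p/w_p)\|\scrv\xi\|$; this computation is what actually produces $\tau_p\le\lambda_p-\scrf(\psi_p)/\|\psi_p\|$ and hence the uniform ratio $\tau_p/\lambda_p<189/198$. Your cone language points in the right direction, but the uniform positivity making a Perron argument work is precisely what must be checked, with explicit $p$-dependence. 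Finally, a smaller gap: integrating twice and fixing constants from $R_n(0)=R_n(1)=0$ does not by itself yield the factor $\frac{p+1}{2}\Phi_p(x)(1-\Phi_p(x))$. One must change variables to $y=\Phi_p(x)$, so that $D_n''(y)$ is controlled by $(p+x)(\ln(p+1)-\ln p)^2$ times $|\scrv^ng_0'-(-\lambda_p)^n\scrl_p(g_0')\psi_p|$, and then apply the interpolation identity $D_n(y)=-\frac{y(1-y)}{2}D_n''(t_y)$ for $C^2$ functions vanishing at both endpoints; that is where the quadratic boundary factor and the constant $(p+1)/2$ come from.
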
 


For convenience and accuracy of estimates, from now on we will assume that $p\ge 2$.
Known results \cite{Wir}\cite{IK} has justified that the above theorems
 hold for $p=1$.

\section{The Operators} 
We fix $p\ge 2$. 
Let
$$\varphi_0(x)=\nu([0,x])=\int_0^x f_0 dx,$$
where $f_0$ and $\nu$ are as in Theorem \ref{gthmain}.
Note
\begin{align*}
\varphi_{n+1}(x)
=&\nu(T^{-(n+1)}([0,x]))=\nu(T^{-n}(T^{-1}([0,x])))\\
=&\nu(T^{-n}(\bigcup_{k=p}^\infty [\frac{p}{k+x},\frac{p}{k}]))
=\sum_{k=p}^\infty(\varphi_n(\frac{p}{k})-\varphi_n(\frac{p}{k+x})).
\end{align*}
This recursive formula implies that $\varphi_n\in C^2(I)$ for all $n$ 
and hence
$$\varphi_{n+1}'(x)=\sum_{k=p}^{\infty}\frac{p}{(k+x)^2}\varphi_n'(\frac{p}{k+x})=(\mathscr{G}_p\varphi_n')(x).$$
 Let 
$$g_n(x)=(p+x)\varphi_n'(x)$$
(Note that $g_0=(p+x)\varphi_0'=(p+x)f_0$ as in Theorem \ref{gthmain}).
Then
$$g_{n+1}(x)=(\scru g_n)(x),$$
where
$$(\mathscr{U}g)(x)=\sum_{k=p}^\infty g(\frac{p}{k+x})h_k(x)$$
and
$$h_k(x)=\frac{p+x}{(k+x)(k+1+x)}=\frac{k+1-p}{k+1+x}-\frac{k-p}{k+x}.$$
For $g\in C^1(I)$,
$$(\scru g)(x)=g(0)+
\sum_{k=p}^\infty\frac{k+1-p}{k+1+x}(g(\frac{p}{k+x})-g(\frac{p}{k+1+x})),$$
Hence we have
\begin{align*}
(\scru g)'(x)=&-\sum_{k=p}^\infty (\frac{k+1-p}{(k+1+x)^2}(g(\frac{p}{k+x})-g(\frac{p}{k+1+x}))
\\&+\frac{p}{(k+x)^2}g'(\frac{p}{k+x})h_k(x))
=-\scrv g',
\end{align*}
where
\begin{align*}
\scrv f(x)=\sum_{k=p}^\infty(\frac{k+1-p}{(k+1+x)^2}\int^{\frac{p}{k+x}}_{\frac{p}{k+1+x}}f(y)dy+\frac{ph_k(x)}{(k+x)^2}f(\frac{p}{k+x})).
\end{align*}
Then
$$(\scru^n g)'=(-1)^n\scrv^n g'$$
for all $g\in C^1(I)$ and all $n$. We will then concentrate on the operator
$\scrv$. 
As the crucial part of this paper, we find a auxiliary function to estimate
the eigenvalue of $\scrv$:

\begin{theorem}\label{auxfun}
There is $\xi\in C(I)$ and $\xi>0$ such that
\begin{equation}\label{estv}
v_p\xi\le\scrv\xi\le w_p\xi.
\end{equation}
\end{theorem}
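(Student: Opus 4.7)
The plan is to exhibit an explicit positive $\xi\in C(I)$ for which the two inequalities follow by direct computation. Because every coefficient appearing in the defining series of $\mathscr{V}$ is positive on $I$, the operator is of Perron--Frobenius type, and the estimate $v_p\xi\le\mathscr{V}\xi\le w_p\xi$ will serve (in the subsequent development of the paper) to sandwich $\lambda_p=\rho(\mathscr{V})$ between $v_p$ and $w_p$ via a Collatz--Wielandt argument applied to $\xi$ and its iterates.

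The first step is to select a candidate $\xi$. The simplest plausible choices are a constant, $\xi\equiv 1$, or a one-parameter family such as $\xi(x)=(p+x)^{-\alpha}$, motivated by the fact that $\frac{1}{p+x}$ is the invariant density of $T_p$ and $g\equiv 1$ is the eigenfunction of $\mathscr{U}$ for eigenvalue $1$ (so the sought eigenfunction of $\mathscr{V}=-\partial_x\circ\mathscr{U}\circ\int$ lives among the derivatives of second eigenfunctions of $\mathscr{U}$). For any such $\xi$ the integral
$$\int_{p/(k+1+x)}^{p/(k+x)}\xi(y)\,dy$$
can be evaluated in closed form as a rational or logarithmic function of $k$ and $x$, reducing $\mathscr{V}\xi(x)$ to a double sum over $k$.

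Next I would apply partial fractions in $k$, using identities such as
$$\frac{1}{(k+x)(k+1+x)^{2}}=\frac{1}{k+x}-\frac{1}{k+1+x}-\frac{1}{(k+1+x)^{2}},$$
so that each term splits into a telescoping part (contributing $\frac{1}{p+x}$ after summation) and pieces that reassemble into the Hurwitz zeta values $\zeta(2,p+x)$ and $\zeta(3,p+x)$. This produces a closed-form expression for $\mathscr{V}\xi(x)/\xi(x)$ in terms of $p+x$ and these two zeta values. The two-sided bound in $x\in I$ is then obtained by checking monotonicity of this ratio in $x$ --- so that its extrema occur at the endpoints $x=0$ or $x=1$ --- and applying sharp Euler--Maclaurin estimates for $\zeta(2,p+x)$ and $\zeta(3,p+x)$.

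The main obstacle is getting the sharp constants $v_p=\frac{p}{2(p+1/3)^{2}}$ and $w_p=\frac{p}{2(p+1/3)^{2}-2/3}$ to match the output of the calculation. The appearance of $p+\tfrac13$ in the denominators strongly suggests that the first non-trivial Euler--Maclaurin correction $\tfrac{1}{6}$ to the integral approximation of $\zeta(3,p)$ is what drives the constants, and that a naive $\xi\equiv 1$ will produce bounds that are either slightly too loose or asymmetric. I therefore expect $\xi$ to require fine-tuning, most likely as a short linear combination of the simple rationals above chosen so that the leading $O(p^{-2})$ error in the zeta expansion cancels, leaving a residual of size $O(p^{-3})$ --- precisely the order of the gap $w_p-v_p$. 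Verifying the uniform bound in $x$, rather than just the limiting value at a single point, is the delicate part of the argument.
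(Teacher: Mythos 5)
Your proposal is a plan rather than a proof: it never exhibits a function $\xi$ for which the inequality can actually be verified, and it defers exactly the step that carries all the difficulty (``verifying the uniform bound in $x$ \dots is the delicate part''). The key idea you are missing is that one should not \emph{guess} $\xi$ and then try to evaluate the infinite series defining $\scrv\xi$; one should \emph{manufacture} $\xi$ so that $\scrv\xi$ is known exactly in closed form. The paper does this by inverting $\scru$: the functional equation $\frac{H(x)}{p+x}-\frac{H(1+x)}{p+1+x}=\frac{1}{(p+x)(p+1+x)}\,g\bigl(\frac{p}{p+x}\bigr)$ lets one solve $\scru g=H$ explicitly, and taking $H_a(x)=\frac{1}{p+a+x}$ gives an elementary $g_a$ whose derivative $\xi_a=g_a'>0$ satisfies $\scrv\xi_a=-(\scru g_a)'=\frac{1}{(p+a+x)^2}$ with no series left to estimate. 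The ratio $\xi_a/\scrv\xi_a$ is then an explicit rational function of $x$ whose maximum (at $x=0$) and interior minimum can be computed by calculus; the parameter $a$ is tuned so that the endpoint values agree, which forces $a$ near $\tfrac13$ and produces exactly the constants $v_p=\frac{p}{2(p+\frac13)^2}$ and $w_p$. Finally one sets $\xi=\scrv\xi_{1/3}=\frac{1}{(p+\frac13+x)^2}$ and uses positivity and linearity of $\scrv$ to convert the two-sided bound on $\xi_{1/3}/\scrv\xi_{1/3}$ into $v_p\xi\le\scrv\xi\le w_p\xi$.

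By contrast, the concrete candidates you name would fail. The band $[v_p,w_p]$ has relative width $\frac{w_p-v_p}{\lambda_p}=O(p^{-2})$, whereas for $\xi\equiv 1$ (or any $(p+x)^{-\alpha}$ with the wrong shape) the ratio $\scrv\xi/\xi$ varies across $I$ by a relative amount of order $p^{-1}$, since $\scrv$ maps positive functions to functions behaving like $(p+c+x)^{-2}$; so no constant sandwich with these endpoints is possible. Your instinct that $p+\tfrac13$ arises from a correction term and that $\xi$ needs fine-tuning within a family is sound, and your identification $v_p=\frac{p}{2(p+1/3)^2}$ is correct, but the Euler--Maclaurin/Hurwitz-zeta route you outline gives no mechanism for locating the correct member of the family or for obtaining a closed form of $\scrv\xi$ sharp enough to verify the inequality pointwise on all of $I$. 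As written, the proposal does not establish the theorem.
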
 

\begin{remark}For $p\ge 2$, we may explicitly choose
$$\xi(x)
=\frac{1}{(p+\frac13+x)^2}.$$
\end{remark}

Linearity and positiveness of $\scrv$ leads to the following estimate:
\begin{corollary}\label{genest}
Given $h\in C(I)$ and $h>0$. Denote
$$\beta_1=\min_{x\in I}\frac{\xi(x)}{h(x)},\beta_2=\max_{x\in I}\frac{\xi(x)}{h(x)}.$$
Then for every $n$,
$$\frac{\beta_1}{\beta_2}v_p^nh\le\scrv^nh\le\frac{\beta_2}{\beta_1}w_p^nh.$$
\end{corollary}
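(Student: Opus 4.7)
The plan is to exploit the linearity and positivity of $\scrv$ together with the two-sided bound $v_p\xi\le\scrv\xi\le w_p\xi$ from Theorem \ref{auxfun}. Positivity of $\scrv$ (i.e.\ $f\ge 0\Rightarrow\scrv f\ge 0$) is immediate from its explicit formula, since for every $k\ge p$ the coefficients $\frac{k+1-p}{(k+1+x)^2}$, $\frac{ph_k(x)}{(k+x)^2}$, and the length of the integration interval $[\frac{p}{k+1+x},\frac{p}{k+x}]$ are all nonnegative on $I$. Hence $\scrv$ is order-preserving on $C(I)$.

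First I would iterate Theorem \ref{auxfun}: applying $\scrv$ to the sandwich $v_p\xi\le\scrv\xi\le w_p\xi$ and using order-preservation together with another application of the same bound, a straightforward induction on $n$ gives
\begin{equation*}
v_p^n\xi\le\scrv^n\xi\le w_p^n\xi\qquad\text{for every } n\ge 0.
\end{equation*}

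Next, the definitions of $\beta_1,\beta_2$ together with $h>0$ yield the pointwise sandwich $\beta_1 h\le\xi\le\beta_2 h$ on $I$. Applying the positive linear operator $\scrv^n$ to these inequalities produces
\begin{equation*}
\beta_1\,\scrv^n h\le\scrv^n\xi\le\beta_2\,\scrv^n h.
\end{equation*}
Combining the two displays, $\beta_1\,\scrv^n h\le\scrv^n\xi\le w_p^n\xi\le w_p^n\beta_2 h$ and $\beta_2\,\scrv^n h\ge\scrv^n\xi\ge v_p^n\xi\ge v_p^n\beta_1 h$, and dividing by $\beta_1$ and $\beta_2$ respectively gives the claimed two-sided estimate.

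There is really no serious obstacle here: the corollary is a routine packaging of a Perron-type comparison argument, and the only point that merits explicit mention is the positivity of $\scrv$, which is visible from its defining series. The content of the result is entirely carried by Theorem \ref{auxfun}; the corollary merely transfers the bound from the specific auxiliary function $\xi$ to an arbitrary strictly positive continuous function $h$, at the cost of the multiplicative distortion $\beta_2/\beta_1$ measuring how far $h$ departs from being proportional to $\xi$.
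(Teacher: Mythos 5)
Your proof is correct and is exactly the argument the paper intends: the paper gives no written proof, merely noting that "linearity and positiveness of $\scrv$" yield the estimate, and your chain $\beta_1 h\le\xi\le\beta_2 h$, iterated sandwich $v_p^n\xi\le\scrv^n\xi\le w_p^n\xi$, and division by $\beta_1,\beta_2$ (positive since $\xi,h>0$ on the compact interval $I$) is the standard way to fill that in.
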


The estimate (\ref{estv}) implies that $\scrv$ has an eigenvalue $\lambda=\lambda_p$,
which is the largest one in absolute value,
between $v_p$ and $w_p$:
\begin{theorem}\label{thmeigen}
Assume that there is $\xi\in C(I)$ and $\xi>0$ such that
$$v_p\xi\le\scrv\xi\le w_p\xi.$$
Then $\scrv$ has an eigenvalue $\lambda=\lambda_p\in[v_p,w_p]$ with a positive
eigenfunction $\psi=\psi_p$ and $\psi\ge\xi$. Moreover, for every $f\in C(I)$,
$$\scrv^n f=\lambda^n\scrl(f)\psi+\tau^n\osc(\frac{f}{\psi})\kappa_n\psi,$$
where $\scrl:C(I)\to\mathbb{R}$ is a positive bounded linear functional with
$\|\scrl\|\le \cfrac1{m(\xi)}$, $\tau$ is a constant with
$0<\tau<\lambda$ and $\kappa_n:I\to\mathbb{R}$ is a function
with $|\kappa_n|\le 1$.
\end{theorem}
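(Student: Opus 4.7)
The plan is to build the leading eigenpair $(\lambda,\psi)$ by a Perron--Frobenius style argument that exploits the two-sided hypothesis, and then to encode the spectral gap as a Dobrushin-type contraction for a suitably normalized Markov operator built from $\scrv$.

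\emph{Existence of $\lambda$ and $\psi$.} Positivity of $\scrv$ together with $v_p\xi \le \scrv\xi \le w_p\xi$ shows that the order interval generated by $\xi$ is invariant under $\scrv$ up to scaling: whenever $\alpha\xi \le f \le \beta\xi$, one has $\alpha v_p\xi \le \scrv f \le \beta w_p\xi$. The explicit series for $\scrv$ sends bounded sets in $C(I)$ to equicontinuous families (its kernels are smooth in $x$ and the series converges uniformly on $I$), so $\scrv$ is compact on $C(I)$. Either Krein--Rutman or a Schauder fixed-point applied to the projective map $f \mapsto \scrv f/\max_I(\scrv f/\xi)$ on a convex compact subset of $\{f : f \ge \xi\}$ produces a positive eigenfunction $\psi$; Corollary \ref{genest} applied to $h = \psi$ forces the corresponding eigenvalue $\lambda$ into $[v_p,w_p]$, and rescaling $\psi$ arranges $\psi \ge \xi$.

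\emph{The twisted Markov operator.} The heart of the proof is the substitution
$$(\tilde\scrv g)(x) := \frac{\scrv(g\psi)(x)}{\lambda\,\psi(x)}, \qquad g \in C(I).$$
This operator is positive and $\tilde\scrv 1 = 1$ (by the eigenvalue equation), so $\tilde\scrv g(x) = \int_I g\,dK(x,\cdot)$ for a family of probability kernels $K(x,\cdot)$. The crux is a Dobrushin-type estimate
$$\osc(\tilde\scrv g) \le \rho\,\osc(g) \qquad\text{for some } \rho \in (0,1)\text{ independent of } g.$$
This reduces to a uniform minorization of $K(x,\cdot)$: since the series for $\scrv$ is built from smooth integral pieces over overlapping intervals indexed by $k\ge p$, and $\psi$ is pinched between positive multiples of $\xi$, one extracts a common sub-probability $\nu$ with $K(x,\cdot)\ge\nu$ for every $x\in I$, whose mass $\delta>0$ gives $\rho = 1-\delta$.

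\emph{Iteration and the functional $\scrl$.} Iterating yields $\osc(\tilde\scrv^n g) \le \rho^n\osc(g)$. Positivity and constant-preservation make $\min_I\tilde\scrv^n g$ monotone nondecreasing and $\max_I\tilde\scrv^n g$ monotone nonincreasing in $n$; the vanishing oscillation forces both extremes to a common limit $c(g)$, so $|\tilde\scrv^n g - c(g)| \le \rho^n\osc(g)$ uniformly on $I$. I would then define $\scrl(f) := c(f/\psi)$: linearity and positivity pass from $c$, and $|\scrl(f)| \le \|f/\psi\|_\infty \le \|f\|_\infty/\min_I\xi$ delivers $\|\scrl\| \le 1/m(\xi)$. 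Finally, substituting $g = f/\psi$ into the identity $\scrv^n f = \lambda^n\psi\,\tilde\scrv^n g$ produces
$$\scrv^n f = \lambda^n\scrl(f)\psi + \tau^n\osc(f/\psi)\kappa_n\psi,$$
where $\tau := \rho\lambda \in (0,\lambda)$ and $\kappa_n := \rho^{-n}\bigl(\tilde\scrv^n(f/\psi) - \scrl(f)\bigr)/\osc(f/\psi)$ satisfies $|\kappa_n|\le 1$.

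\emph{Main obstacle.} The technical bottleneck is producing the explicit contraction rate $\rho < 1$ in the second step. The hypothesis only controls $\scrv$ along $\xi$, so the Dobrushin coefficient must be extracted from the structure of $\scrv$ itself: the smooth integral branches indexed by $k\ge p$ carry a significant common mass and should yield an $x$-uniform lower bound on $K(x,\cdot)$. Identifying enough overlap between the kernels $K(x,\cdot)$ for different $x$ to give a quantitative $\delta>0$, and doing so cleanly in terms of $\psi$ and $\xi$ (using $\xi \le \psi \le (w_p/v_p)\xi$), is where the explicit form of the series and the smoothness of $\psi$ must all be used in concert; once this is in hand, all remaining estimates are essentially routine.
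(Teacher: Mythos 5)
Your architecture (Perron--Frobenius existence, then a twisted Markov operator $\tilde\scrv g=\scrv(g\psi)/(\lambda\psi)$ with a Dobrushin oscillation contraction) is essentially a re-derivation of Wirsing's general Theorem \ref{Wirthm}, which the paper simply cites. What the paper actually does in Section \ref{pfeigen} is verify the hypotheses of that theorem: it constructs an explicit positive linear functional $\scrf\le\scrv$ by dropping the point-evaluation part of $\scrv$ and keeping only the integral part, bounds the resulting kernel $k(x,\lfloor p/y - x\rfloor)$ from below piecewise in $y$, and then grinds out the inequality $\scrf(\xi) > (1-v_p/w_p)\|\scrv\xi\|$ with explicit constants. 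That computation is the entire content of the paper's proof; without it there is no theorem.

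Your proposal has two concrete gaps, one of which is fatal to the plan as written. First, the compactness claim for $\scrv$ on $C(I)$ is not justified and is very likely false: the operator contains the genuine transfer-operator terms $\frac{ph_k(x)}{(k+x)^2}\,f\!\left(\frac{p}{k+x}\right)$, which involve point evaluations of $f$ composed with contracting maps; applied to a mere sup-norm bounded family, these do not produce equicontinuity, and Gauss--Kuzmin type transfer operators are not compact on $C(I)$ (one usually needs $C^1$, BV, or a space of holomorphic functions). Wirsing's theorem is designed precisely to avoid compactness, using only positivity and the two-sided bound along $\theta$. Second — and you flag this yourself — the minorization $K(x,\cdot)\ge\nu$ with quantitative mass $\delta>0$ is asserted but never extracted. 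This is not a routine cleanup step: it is exactly the inequality $\scrf(\xi) > (1-v_p/w_p)\|\scrv\xi\|$ of Theorem \ref{Wirthm}, and in the paper it occupies the bulk of Section \ref{pfeigen}, requiring a case split on $y$ against the index $\lfloor p/y - x\rfloor$, explicit lower bounds on the discrete-variable kernel, and several pages of integral estimates. Until that estimate is produced, your $\rho<1$ (hence $\tau<\lambda$) is not established, and neither is the bound $\|\scrl\|\le 1/m(\xi)$ in a way tied to the stated constants. In short, the proposal correctly sketches the scaffold of Wirsing's theorem but leaves unbuilt the one load-bearing piece that is new in this paper.
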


Theorem \ref{thmeigen} is a consequence of the following theorem of
Wirsing:

\begin{theorem}[cf. \cite{Wir} 
{\cite[Theorem 2.2.4]{IK}}]\label{Wirthm}
 Let $\scrw:C(I)\to C(I)$ be a positive bounded linear
operator and $\scrf:C(I)\to\mathbb{R}$ a positive bounded linear functional
such that $\scrw\ge\scrf$. Assume that there is $\theta\in C(I)$ and two constants
$v,w$, $0<v\le w$, such that
$$m(\theta)=\inf_{x\in I}\theta(x)>0,$$
$$v\le\frac{\scrw\theta(x)}{\theta(x)}\le w$$
for all $x\in I$ and
\begin{equation}\label{estf}
\scrf(\theta)>(1-\frac{v}{w})\|\scrw\theta\|.
\end{equation}
Then $\scrw$ has an eigenvalue $\lambda\in[v,w]$ with a positive eigenfunction
$\tilde\theta$ such that
$$\tilde\theta\ge\theta\ge m(\theta)>0,$$
$$0<w\frac{\scrf(\theta)}{\|\scrw\theta\|}-(w-v)\le\frac{\scrf\tilde\theta}{\|\tilde\theta\|}\le\lambda,$$
and for every $f\in C(I)$ and every $n$ we have
$$\scrw f=\lambda^n\scrl(f)\tilde\theta+(\lambda-\frac{\scrf\tilde\theta}{\|\tilde\theta\|})^n\osc(\frac{f}{\tilde\theta})\kappa_n\tilde\theta,$$
where $\scrl:C(I)\to\mathbb{R}$ is a positive bounded linear functional with
$\|\scrl\|\le\cfrac1{m(\theta)}$ and $\kappa_n:I\to\mathbb{R}$ is a function
with $|\kappa_n|\le 1$.
\end{theorem}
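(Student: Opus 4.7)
The plan is to follow Wirsing's classical strategy: positivity of $\scrw$ together with the bounds $v\theta\le\scrw\theta\le w\theta$ confines the normalized iterates of $\scrw$ to a compact cone of positive functions, and the hypothesis $\scrw\ge\scrf$ combined with the gap condition (\ref{estf}) upgrades this compactness to a strict geometric contraction on oscillations, which yields both the eigenfunction and the spectral decomposition.

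First I would construct the eigenfunction. By induction on $v\theta\le\scrw\theta\le w\theta$ one gets $v^n\theta\le\scrw^n\theta\le w^n\theta$, so after dividing by $\|\scrw^n\theta\|$ the iterates stay in a compact positive family. The key step is to show that the ratios $\scrw^{n+1}\theta(x)/\scrw^n\theta(x)$ converge uniformly to a single constant $\lambda\in[v,w]$. This follows from an oscillation-contraction lemma, which I would prove by decomposing $\scrw = \mathbf{1}\scrf + (\scrw-\mathbf{1}\scrf)$, where $\mathbf{1}\scrf$ denotes the rank-one operator $f\mapsto\scrf(f)\cdot\mathbf{1}$. The hypothesis $\scrw\ge\scrf$ ensures the residual is still a positive operator, and since $(\scrw-\mathbf{1}\scrf)\theta=\scrw\theta-\scrf(\theta)\mathbf{1}$ has sup norm $\|\scrw\theta\|-\scrf(\theta)$, one obtains $\osc(\scrw f)\le(\|\scrw\theta\|-\scrf(\theta))\osc(f/\theta)$. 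Dividing by $m(\scrw\theta)\ge v\cdot m(\theta)$ and invoking (\ref{estf}) yields a strict contraction of $\osc(f/\theta)$, whose iteration produces the eigenfunction $\tilde\theta$ as a uniform limit (renormalized so that $\tilde\theta\ge\theta$).

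With the eigenfunction in hand, the spectral decomposition for arbitrary $f\in C(I)$ follows by applying the same contraction lemma with $\tilde\theta$ in place of $\theta$. Since $\scrw\tilde\theta=\lambda\tilde\theta$, the resulting contraction factor is exactly $(\|\scrw\tilde\theta\|-\scrf(\tilde\theta))/\|\tilde\theta\|=\lambda-\scrf(\tilde\theta)/\|\tilde\theta\|$, and iterating shows that $\scrw^n f/(\lambda^n\tilde\theta)$ converges uniformly to a constant $\scrl(f)$ at this geometric rate. Linearity and positivity of $\scrl$ are immediate from the construction, and the bound $\|\scrl\|\le 1/m(\theta)$ follows from the comparison $f\le(\|f\|/m(\theta))\theta$ together with positivity of $\scrl$.

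The main obstacle is pinning down the contraction factor precisely as $\lambda-\scrf(\tilde\theta)/\|\tilde\theta\|$ rather than a cruder bound. This requires careful bookkeeping in the oscillation lemma, in particular tracking how the residual operator $\scrw-\mathbf{1}\scrf$ interacts with $\tilde\theta$; establishing also the lower estimate $\scrf(\tilde\theta)/\|\tilde\theta\|\ge w\scrf(\theta)/\|\scrw\theta\|-(w-v)$, which is needed to guarantee that the error rate is genuinely less than $\lambda$, is a delicate point that uses both the gap hypothesis and the inequality $\tilde\theta\ge\theta$ obtained in the first step.
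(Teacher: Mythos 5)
The paper does not actually prove this theorem: it is quoted from Wirsing \cite{Wir} and from \cite[Theorem 2.2.4]{IK} and used as a black box, so there is no in-paper proof to compare against. Your outline does follow the strategy of those references --- subtract the rank-one operator $f\mapsto\scrf(f)\mathbf{1}$, use positivity of the residual $\scrw-\mathbf{1}\scrf$ to contract ratio-oscillations, build $\tilde\theta$ as a limit, then rerun the contraction with $\tilde\theta$ in place of $\theta$ to read off the rate $\lambda-\scrf(\tilde\theta)/\|\tilde\theta\|$ --- and that is the right skeleton.

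Two steps, however, do not hold up as written. First, the chain ``$\osc(\scrw f)\le(\|\scrw\theta\|-\scrf(\theta))\osc(f/\theta)$, then divide by $m(\scrw\theta)\ge v\,m(\theta)$'' does not yield a contraction: the oscillation of a quotient is not bounded by the oscillation of the numerator over the minimum of the denominator (take $\scrw f$ constant and $\scrw\theta$ nonconstant), and even formally the resulting factor $(\|\scrw\theta\|-\scrf(\theta))/(v\,m(\theta))$ need not be below $1$. The inequality you actually need is the two-sided bound
$a+\frac{\scrf(f)-a\scrf(\theta)}{\|\scrw\theta\|}\le\frac{\scrw f}{\scrw\theta}\le b-\frac{b\scrf(\theta)-\scrf(f)}{\|\scrw\theta\|}$
for $a=\inf(f/\theta)$, $b=\sup(f/\theta)$, obtained by applying the positive operator $\scrw-\mathbf{1}\scrf$ to $f-a\theta\ge0$ and $b\theta-f\ge0$ and using $\mathbf{1}\ge\scrw\theta/\|\scrw\theta\|$; this gives $\osc(\scrw f/\scrw\theta)\le(1-\scrf(\theta)/\|\scrw\theta\|)\osc(f/\theta)$, and hypothesis (\ref{estf}) makes the factor strictly less than $v/w$. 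Second, the claim that the normalized iterates lie in a ``compact positive family'' of $C(I)$ is unjustified (a positive bounded operator provides no equicontinuity) and is also unnecessary: convergence of $\scrw^n\theta/\lambda^n$ follows from the summability of the lengths of the nested ratio intervals produced by the contraction. Finally, the inequality $\scrf(\tilde\theta)/\|\tilde\theta\|\ge w\scrf(\theta)/\|\scrw\theta\|-(w-v)$, which you correctly identify as the delicate point and which is precisely what guarantees the error rate is positive and strictly below $\lambda$, is asserted but not derived; a complete proof must extract it from the construction of $\tilde\theta$ (using $\tilde\theta\ge\theta$ together with control of $\|\tilde\theta\|$ in terms of $\|\scrw\theta\|/v$).
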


\section{Proof of Theorem \ref{auxfun}}
Fix $p\ge 2$. Let $H$ be a bounded continuous function on $\mathbb{R}^+$ such that
$$\lim_{x\to\infty}\frac{H(x)}{x}=0.$$
We look for $g$ on $(0,1]$ such that $\scru g=H$. 
Assume that
$$\scru g(x)=\sum_{k=p}^\infty\frac{p+x}{(k+x)(k+1+x)}g(\frac{p}{k+x})=H(x)$$
for all $x\in\mathbb{R}^+$. Then
$$\frac{H(x)}{p+x}-\frac{H(1+x)}{p+1+x}=\frac{1}{(p+x)(p+1+x)}g(\frac{p}{p+x}),
x\in\mathbb{R}^+.$$
Hence
$$g(y)=(\frac{p}{y}+1)H(\frac{p}{y}-p)-\frac{p}{y}H(\frac{p}{y}-p+1), y\in(0,1].$$
It is easy to check that such $g$ indeed satisfies $\scru g=H$.

For $a\in I$, define
$$H_a(x)=\frac{1}{p+a+x}.$$
Then
$$g_a(x)=(\frac{p}{x}+1)H_a(\frac{p}{x}-p)-\frac{p}{x}H_a(\frac{p}{x}-p+1)=\frac{p+x}{p+ax}-\frac{p}{p+(1+a)x}$$
satisfies $\scru g_a(x)=H_a(x)$ for all $x\in I$.
Let
$$\xi_a(x)=g_a'(x)=\frac{p(1-a)}{(p+ax)^2}+\frac{p(1+a)}{(p+(1+a)x)^2}>0.$$
Then
$$\scrv\xi_a(x)=-(\scru g_a)'(x)=\frac{1}{(p+a+x)^2}.$$
We would like to choose $a$ such that
$$\frac{\xi_a}{\scrv\xi_a}(0)=\frac{\xi_a}{\scrv\xi_a}(1).$$
So we must have
$$\frac2p(p+a)^2=(\frac{p(1-a)}{(p+a)^2}+\frac{p(1+a)}{(p+1+a)^2})(p+1+a)^2,$$
which yields
\begin{equation}\label{rhoo}
2(p+a)^4-(2p^3+p^2)(p+a)-p^2(p+1)=0.\end{equation}
Denote the left-hand side of (\ref{rhoo}) by $\rho(a)$. As $$\rho'(a)=8(p+a)^3-(2p^3+p^2)>0$$ for $a\ge 0$, 
$\rho(a)=0$ has a unique positive real root
$$a=\alpha=\alpha_p.$$
We note that
$$\rho(a)=(6a-2)p^3+(12a^2-a-1)p^2+8a^3p+2a^4.$$
Hence for $p\ge 2$,
$$\rho(0.32)=-0.08 p^3- 0.0912 p^2  + 0.262144 p + 0.02097152<0$$
and $$\rho(\frac13)=\frac{8p}{27}+\frac{2}{81}>0.$$
So $\alpha\in(0.32, \cfrac13)$. We also have $$\lim_{p\to\infty}\alpha_p=\frac13.$$
For $a\in[\al,\cfrac13]$, $\cfrac{\xi_{a}}{\scrv\xi_a}$ attains its maximum
$$M(a)=\frac2p(p+a)^2\le\frac{1}{v_p}.
$$
at $x=0$, and has a minimum (see Lemma \ref{computma})
\begin{align}\label{eqma}
m(a)=&\frac1p((1-a)p^2+a^2(1+a)+\\
&3(p-pa-a^2)(pa+a+a^2)((1-a)\gamma+(1+a)\gamma^{-1}))\nonumber
\end{align}
at 
$$x=x_0=\frac{(\gamma-1)p}{1+a-a\gamma}\in I$$
with
$$\gamma=\gamma_a=\frac{p+(1+a)x_0}{p+a x_0}=\sqrt[3]{\frac{(1+a)(pa+a+a^2)}{(1-a)(p-pa-a^2)}}<\frac{p+1+a}{p+a}.$$
Note that
$$\gamma_\al^3=1+\frac{3\al^2+(3p+1)\al-p}{(1-\al)(p-p\al-\al^2)}\ge1.$$
We have
$$\al\ge\frac{\sqrt{9p^2+18p+1}-(3p+1)}{6}=\frac{2p}{\sqrt{9p^2+18p+1}+(3p+1)}>\frac{p}{3p+2}.$$
For $a\in[\al,\cfrac13]$
 and $p\ge 2$,
$$\gamma\le\gamma_{\frac13}=\sqrt[3]{1+\frac1{\frac23p-\frac19}}<1+\frac{1}{2p
}<1+a.$$
Then
$$(1-a)\gamma+(1+a)\gamma^{-1}\ge2+\frac{1-a}{2p}-\frac{1+a}{2p+1}>2-\frac{a}{p},$$
as the left-hand side is decreasing in $\gamma$ for $1<\gamma<1+a$.
So
\begin{equation}\label{m11}
m(a)>\frac1p\big((1-a)p^2+a^2(1+a)+3(p-pa-a^2)(pa+a+a^2)(2-\frac{a}{p})\big)
\end{equation}
Therefore
$$m(\frac13)>\frac{2}{p}(p^2+\frac23p-\frac{11}{54}+\frac{2}{81p})>\frac{1}{w_p}
.$$

In conclusion, we have
$$m(\frac13)\le\frac{\xi_\frac13}{\scrv\xi_\frac13}\le M(\frac13).$$
For convenience, we choose
\begin{equation}\label{xi31}
\xi=\scrv\xi_{\frac13}=\frac{1}{(p+\frac13+x)^2}.
\end{equation}
Then 
$$v_p\xi\le\scrv\xi\le w_p.$$

\section{The Eigenvalue}\label{pfeigen}
In this section we apply Theorem \ref{Wirthm} to show Theorem \ref{thmeigen},
i.e. $\scrv$ has an
eigenvalue in $[v_p,w_p]$.

Fix $p\ge 2$. To apply Theorem \ref{Wirthm}, put $\scrw=\scrv$, $v=v_p$, $w=w_p$ and $\theta=\xi$ as in (\ref{xi31}). We just need to identify a linear functional $\scrf$
.
For every $f\in C(I)$ and $f\ge 0$,
\begin{align*}
\scrv f(x)\ge\sum_{k=p}^\infty\frac{k+1-p}{(k+1+x)^2}\int^{\frac{p}{k+x}}_{\frac{p}{k+1+x}}f(y)dy
=\int_{0}^1 k(x,\lfloor\frac{p}{y}-x\rfloor)f(y)dy,
\end{align*}
where
$$k(x,0)=0, x\in I,$$
$$k(x,t)=\frac{t+1-p}{(t+1+x)^2},$$
and $\lfloor x\rfloor$ denotes the largest integer no more than $x$.
Note that for fixed $x\in I$, for $t\ge 0$, $k(x,t)$ is (with $t$) increasing for $t-x\le 2p-1$ and 
decreasing for $t-x\ge 2p-1$.

For $0<y\le\frac{p}{2p+1}$, we have $\lfloor\frac{p}{y}-x\rfloor-x\ge2p-1$.
Hence
$$k(x,\lfloor\frac{p}{y}-x\rfloor)\ge\frac{\frac{p}{y}-x+1-p}{(\frac{p}{y}+1)^2}\ge\frac{py(1-y)}{(p+y)^2}
$$
For $\frac{p}{2p+1}<y\le\frac{1}{2}$, we have $2p-1\le\lfloor\frac{p}{y}-x\rfloor\le2p$.
Hence\begin{align*}k(x,\lfloor\frac{p}{y}-x\rfloor)\ge
&\min\{k(x,2p-1), k(x,2p)\}\\
=&\min\{\frac{p}{(2p+x)^2},\frac{p+1}{(2p+1+x)^2}\}\\
\ge&\frac{p}{(2p+1)^2}.
\end{align*}
For $\frac12<y\le\frac{p}{p+1}$, we have $\lfloor\frac{p}{y}-x\rfloor-x\le2p-1$.
Hence
$$k(x,\lfloor\frac{p}{y}-x\rfloor)\ge\frac{\frac{p}{y}-x-p}{(\frac{p}{y})^2}\ge\frac{y(p-(p+1)y)}{p^2}.$$

We define for all $f$,
\begin{align*}\scrf(f)
=&\int_{0}^{\frac{p}{2p+1}}\frac{py(1-y)}{(p+y)^2}f(y)dy
+\int_{\frac{p}{2p+1}}^{\frac12}\frac{p}{(2p+1)^{2}}f(y)dy\\
&+\int_{\frac12}^{\frac{p}{p+1}}\frac{y(p-(p+1)y)}{p^2}f(y)dy.
\end{align*}
Then $\scrf$ is positive and $\scrf(f)\le\scrv f$ for all positive $f$.
We have
\begin{align*}
\scrf(\xi)
=&\int_{0}^{\frac{p}{2p+1}}\frac{py(1-y)}{(p+y)^2}\frac{dy}{(p+\frac13+y)^2}
+\int_{\frac{p}{2p+1}}^{\frac12}\frac{p}{(2p+1)^2}\frac{dy}{(p+\frac13+y)^2}\\
&+\int_{\frac12}^{\frac{p}{p+1}}\frac{y(p-(p+1)y)}{p^2}\frac{dy}{(p+\frac13+y)^2}\\
\ge&\frac{p(1-\frac{p}{2p+1})}{(p+\frac{p}{2p+1})^2(p+\frac13+\frac12)^2}\int_{0}^{\frac{p}{2p+1}}ydy
+\frac{p(\frac12-\frac{p}{2p+1})}{(2p+1)^{2}(p+\frac13+\frac12)^2}\\
&+\frac{\frac12}{p^2(p+\frac13+\frac12)^2}\int_{\frac12}^{\frac{p}{p+1}}(p-(p+1)y)dy\\
=&\frac{1}{16(p+\frac56)^2}(\frac{p}{(p+1)(p+\frac12)}+\frac{2p}{(p+\frac12)^3}+\frac{(p-1)^2}{p^2(p+1)})\\
>&\frac{1}{16(p+\frac56)^2}(\frac{p-\frac12}{(p+\frac12)^2}+\frac{2-\frac1p}{(p+\frac12)^2}+\frac{p^2-2p+1}{p(p+\frac12)^2})\\
=&\frac{p-\frac14}{8(p+\frac56)^2(p+\frac12)^2}.
\end{align*}
For $p\ge 2$,
\begin{align*}
\frac{w_p\scrf(\xi)}{\|\scrv\xi\|}\ge&\frac{\scrf(\xi)}{\|\xi\|}=(p+\frac13)^2\scrf(\xi)=\frac{(p+\frac13)^2}{(p+\frac56)^2}\cdot\frac{p-\frac14}{8(p+\frac12)^2}
>\frac{p^2(p-\frac14)}{8(p+\frac12)^4}\\
=&\frac{p^2(p-\frac14)}{8(p^2+\frac23p+\frac19)(p^2+\frac23p-\frac29)}\cdot\frac{(p+\frac13)^4}{(p+\frac12)^4}
\cdot\frac{p^2+\frac23p-\frac29}{(p+\frac13)^2}\\
\ge&(w_p-v_p)\cdot\frac34p(p-\frac14)(\frac{14}{15})^4(\frac{46}{49})\\
>&(w_p-v_p) \frac p2(p-\frac14)>w_p-v_p.
\end{align*}
Thus (\ref{estf}) holds for $\theta=\xi$. Hence Theorem \ref{thmeigen} follows
from Theorem \ref{Wirthm} with
$\psi=\tilde\theta$ and 
\begin{equation}\label{esttau}
\tau=\lambda-\frac{\scrf\psi}{\|\psi\|}\le\lambda-(\frac{w_p\scrf(\xi)}{\|\scrv\xi\|}-(w_p-v_p)).\end{equation}
\begin{remark} From (\ref{esttau}) we have
\begin{align}
\tau<&\lambda-(w_p-v_p)(\frac{p(p-\frac14)}{2}-1)\nonumber\\
=&\lambda-\frac{p}{2(p^2+\frac23p-\frac29)}\cdot\frac{\frac13(p^2-\frac14 p-2)}{2(p+\frac13)^2}\nonumber\\
\label{esttau2}<&\lambda-\frac{9}{198}w_p\le\frac{189}{198}\lambda.
\end{align}
\end{remark}

\section{Proof of the Theorem \ref{gthmain}}
Fix $p$. Let $\psi=\psi_p$ be as in Theorem \ref{thmeigen}, i.e. the eigenfunction
of $\scrv$ corresponding to $\lambda=\lambda_p$. Let 
$$\tilde\psi(x)=\int_0^x\psi_p(t)dt, x\in I$$
and
$$\Psi_p(x)=\int_0^x\frac{\tilde\psi(t)-\scru^\infty\tilde\psi}{p+t}dt, x\in  I.$$
Then
$$((p+x)\Psi_p'(x))'=\psi_p(x)$$
and $\Psi_p(0)=\Psi_p(1)=0$.

Let $\scrl_p=\scrl$ as in Theorem \ref{thmeigen} and 
$g_0=(p+x)f_0$.
For $n\in\mathbb{N}$ and $y\in I$ we set
\begin{align*}
D_n(y)=&\nu(T_p^{-n}([0,p(e^{y(\ln(p+1)-\ln p)}-1)]))-y\\
&-(-\lambda_p)^n\scrl_p(g_0')\Psi_p(p(e^{y(\ln(p+1)-\ln p)}-1)),
\end{align*}
so that
$$D_n(\Phi_p(x))=\varphi_{n}(x)-\Phi_p(x)-(-\lambda_p)^n\scrl_p(g_0')\Psi_p(x),
x\in I.$$
Then 
\begin{align*}
\frac{1}{(\ln(p+1)-\ln p)^2}\cdot\frac{D_n''(\Phi_p(x))}{p+x}
=&(\scru^ng_0)'(x)-(-\lambda_p)^n\scrl_p(g_0')((p+x)\Psi_p'(x))'\\
=&(-1)^n\scrv^n g_0'(x)-(-\lambda_p)^n\scrl_p(g_0')\psi_p(x).
\end{align*}
By Theorem \ref{thmeigen},
$$|D_n''(\Phi_p(x))|\le\tau_p^n(\ln(p+1)-\ln
p)^2(p+1)\|\psi_p\|\osc(\frac{g_0'}{\psi_p}).$$
Note that $D_n(0)=D_n(1)=0$. We can apply the interpolation formula (see
Lemma \ref{lemintp}): for every $y\in I$ there is $t_y\in I$ such
that
\begin{equation}\label{eqintpap}
D_n(y)=-\frac{y(1-y)}{2}D_n''(t_y).
\end{equation}
So for every $n$ and every $x\in I$,
$$|D_n(\Phi_p(x))|\le\tau_p^n(\ln(p+1)-\ln
p)^2(\frac{p+1}2)\|\psi_p\|\osc(\frac{g_0'}{\psi_p})\Phi_p(x)(1-\Phi_p(x)).$$
Theorem \ref{gthmain} holds.

\section{Complementary Lemmas and Final Remarks}


We first show the computation of $m(a)$ in (\ref{eqma}) and the interpolation formula (\ref{eqintpap}).

\begin{lemma}\label{computma}
(\ref{eqma}) holds.
\end{lemma}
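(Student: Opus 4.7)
The plan is a direct calculation: differentiate $F(x) := \xi_a(x)/(\scrv\xi_a)(x)$, find the interior critical point, substitute back, and simplify using the cubic relation that defines $\gamma$. Writing
$$F(x) = p(1-a)\left(\frac{p+a+x}{p+ax}\right)^2 + p(1+a)\left(\frac{p+a+x}{p+(1+a)x}\right)^2$$
and using $\frac{d}{dx}\frac{p+a+x}{p+ax} = \frac{p-pa-a^2}{(p+ax)^2}$ together with $\frac{d}{dx}\frac{p+a+x}{p+(1+a)x} = -\frac{pa+a+a^2}{(p+(1+a)x)^2}$, after dividing by the nonvanishing factor $2p(p+a+x)$ the equation $F'(x) = 0$ reduces to
$$\frac{(1-a)(p-pa-a^2)}{(p+ax)^3} = \frac{(1+a)(pa+a+a^2)}{(p+(1+a)x)^3}.$$
Taking cube roots yields the stated value of $\gamma$, and inverting $\gamma = (p+(1+a)x_0)/(p+ax_0)$ gives $x_0 = (\gamma-1)p/(1+a-a\gamma)$. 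In particular, $p+ax_0 = p/(1+a-a\gamma)$ and $p+(1+a)x_0 = p\gamma/(1+a-a\gamma)$.

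Abbreviating $P := p-pa-a^2$ and $Q := pa+a+a^2$, a short computation gives $(p+a+x_0)(1+a-a\gamma) = Q + \gamma P$, so
$$F(x_0) = \frac{(Q+\gamma P)^2\bigl((1-a)\gamma^2 + (1+a)\bigr)}{p\gamma^2}.$$
Expanding the square and using the cubic relation $(1-a)P\gamma^3 = (1+a)Q$ to eliminate the $\gamma^3$ and $\gamma^4$ monomials reduces this to
$$F(x_0) = \frac{(1-a)Q^2 + (1+a)P^2}{p} + \frac{3(1+a)Q^2}{p\gamma^2} + \frac{3(1+a)PQ}{p\gamma}.$$
A second application of the same relation, in the form $3(1-a)PQ\gamma = 3(1+a)Q^2/\gamma^2$, rewrites the last two terms as $\frac{3PQ}{p}\bigl((1-a)\gamma + (1+a)\gamma^{-1}\bigr)$, which is exactly the mixed term of (\ref{eqma}).

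It remains to verify the purely algebraic identity
$$(1-a)Q^2 + (1+a)P^2 = (1-a)p^2 + (1+a)a^2,$$
matching the remaining pieces of (\ref{eqma}). This is checked by expanding in $p$: the $p^2$-coefficient works out to $a^2(1-a) + (1-a)^2(1+a) = (1-a)$, the $p$-coefficient cancels, and the constant term equals $(1+a)a^2$. I expect no conceptual hurdle; the only real source of friction is the bookkeeping in the $\gamma$-manipulations, where one must systematically invoke $(1-a)P\gamma^3 = (1+a)Q$ to clear the higher powers of $\gamma$ that appear after expanding $(Q+\gamma P)^2$.
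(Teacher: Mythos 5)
Your proof is correct and follows essentially the same route as the paper: evaluate $\xi_a/(\scrv\xi_a)$ at $x_0$, write the two ratios $\frac{p+a+x_0}{p+ax_0}$ and $\frac{p+a+x_0}{p+(1+a)x_0}$ in terms of $P$, $Q$, $\gamma$, and use the cubic relation $(1-a)P\gamma^3=(1+a)Q$ to clear the extraneous powers of $\gamma$ and land on (\ref{eqma}). The only difference is that you also derive the critical point $x_0$ by differentiating $F$, a step the paper takes as given from the surrounding discussion; your algebra (including the final identity $(1-a)Q^2+(1+a)P^2=(1-a)p^2+(1+a)a^2$) checks out.
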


\begin{proof}
Denote
$$A=p-pa-a^2, B=pa+a+a^2.$$
Then
\begin{align*}
\frac{p+a+x_{0}}{p+ax_{0}}=\frac{p+a+\frac{(\gamma-1)p}{1+a-a\gamma}}{p+a\frac{(\gamma-1)p}{1+a-a\gamma}}=\frac{A\gamma+B}{p},
\end{align*}
\begin{align*}
\frac{p+a+x_0}{p+(a+1)x_0}=\frac{1}{\gamma}\frac{p+a+x_0}{p+ax_0}
=\frac{A+B\gamma^{-1}}{p}.
\end{align*}
Note $$(1-a)A\gamma^2=(1+a)B\gamma^{-1}.$$
Hence
\begin{align*}
m(a)=&\frac{\xi_a}{\scrv\xi_a}(x_0)\\
=&p(p+a+x_0)^2(\frac{1-a}{(p+ax_0)^2}+\frac{1+a}{(p+(1+a)x_0)^2})
\\=&\frac1p((1-a){(A\gamma+B)^2}+(1+a)(A+B\gamma^{-1})^2)
\\=&\frac1p((1-a)A^2\gamma^2+2(1-a)AB\gamma+(1-a)B^2
\\&+(1+a)A^2+2(1+a)AB\gamma^{-1}+(1+a)B^2\gamma^{-2})
\\=&\frac1p((1+a)A^2+(1-a)B^2+3(1+a)AB\gamma^{-1}+3(1-a)AB\gamma)
\\=&\frac1p((1-a)p^2+a^2(1+a)+\\
&3(p-pa-a^2)(pa+a+a^2)((1-a)\gamma+(1+a)\gamma^{-1})) 
\end{align*}
\end{proof}

\begin{lemma}\label{lemintp}
Let $f\in C^2(I)$ such that $f(0)=f(1)=0$. Then for every $y\in I$, there
is $t_y\in I$ such that
\begin{equation}\label{eqintp}
f(y)=-\frac{y(1-y)}{2}f'(t_y).
\end{equation}
\end{lemma}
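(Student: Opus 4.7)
The plan is to recognize this as the standard remainder identity for Lagrange interpolation at the two nodes $0$ and $1$: since $f$ vanishes at both endpoints, the interpolating polynomial of degree $\le 1$ is identically zero, and the Cauchy-style remainder formula yields $f(y) = \tfrac{1}{2} f''(t_y) \cdot y(y-1)$ for some interior point $t_y$. So I would prove it by the classical Rolle's theorem argument rather than invoking interpolation theory.

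For the boundary values $y = 0$ and $y = 1$, the identity is trivial, since both sides vanish, so I fix $y \in (0,1)$. The key step is to introduce the auxiliary function
$$g(t) = f(t) - \frac{f(y)}{y(y-1)}\, t(t-1), \qquad t \in I.$$
By construction $g$ is $C^2$ on $I$ and satisfies $g(0) = g(1) = g(y) = 0$, so it has three distinct zeros in $I$.

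Next I apply Rolle's theorem twice. Between consecutive zeros of $g$ there is a zero of $g'$, giving two distinct zeros of $g'$ in $(0,1)$; between those there is a zero of $g''$, giving some $t_y \in (0,1)$ with $g''(t_y) = 0$. Since
$$g''(t) = f''(t) - \frac{2 f(y)}{y(y-1)},$$
the relation $g''(t_y) = 0$ rearranges to $f(y) = \tfrac{1}{2} y(y-1) f''(t_y) = -\tfrac{1}{2} y(1-y) f''(t_y)$, which is (\ref{eqintp}).

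There is no real obstacle; the only subtlety is to verify that all three zeros $0$, $y$, $1$ are genuinely distinct so that Rolle's theorem can be iterated twice, which is immediate from the assumption $y \in (0,1)$.
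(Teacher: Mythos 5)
Your proof is correct and uses essentially the same argument as the paper: an auxiliary function vanishing at $0$, $y$, $1$ (your $g$ is the paper's $g(t)=y(1-y)f(t)-t(1-t)f(y)$ divided by $y(1-y)$, a purely cosmetic difference since $y\in(0,1)$) followed by two applications of Rolle's theorem. You have also silently corrected a typo in the printed statement, which reads $f'(t_y)$ where $f''(t_y)$ is meant, as your derivation and the application to $D_n''$ in the proof of Theorem \ref{gthmain} both confirm.
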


\begin{proof}(\ref{eqintp}) holds obviously for $y=0$ and $y=1$.
Fix $y\in (0,1)$.
Let $g(t)=y(1-y)f(t)-t(1-t)f(y)$. Then $g(0)=g(y)=g(1)=0$. There must be
$t_y\in I$ such that $g''(t_y)=0$, which just implies (\ref{eqintp}).
\end{proof}

Analogous to Wirsing's approach, Theorem \ref{thmeigen} has a corollary that can be used to compute $\lambda_p$
to any accuracy:
\begin{corollary}
For any $f\in C^1(I)$ with $f'>0$ and for every $n\in\mathbb{N}$,
$$\frac{(\scru^nf)(1)-(\scru^nf)(0)}{(\scru^{n-1}f)(1)-(\scru^{n-1}f)(0)}=-\lambda_p+O((\frac{\tau_p}{\lambda_p})^n)$$
(By (\ref{esttau2}), for $p\ge 2$ we have $\cfrac{\tau_p}{\lambda_p}<\cfrac{189}{198}$.)
\end{corollary}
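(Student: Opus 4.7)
The plan is to convert differences of $\scru^n f$ at the endpoints into integrals of $\scrv^n f'$ and then apply the spectral decomposition from Theorem \ref{thmeigen}. The starting point is the identity $(\scru^n f)' = (-1)^n \scrv^n f'$ established earlier in the paper; integrating over $I$ gives
\begin{equation*}
(\scru^n f)(1) - (\scru^n f)(0) \;=\; (-1)^n \int_0^1 (\scrv^n f')(x)\, dx.
\end{equation*}
Applying Theorem \ref{thmeigen} to $f' \in C(I)$ yields $\scrv^n f' = \lambda_p^n \scrl_p(f')\psi_p + \tau_p^n \osc(f'/\psi_p)\kappa_n \psi_p$ with $|\kappa_n| \le 1$. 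Setting $A = \scrl_p(f')\int_0^1 \psi_p$ and $B_n = \osc(f'/\psi_p)\int_0^1 \kappa_n \psi_p$, one obtains
\begin{equation*}
(\scru^n f)(1) - (\scru^n f)(0) \;=\; (-1)^n\bigl(\lambda_p^n A + \tau_p^n B_n\bigr),
\end{equation*}
where $A$ is a fixed constant and $|B_n|$ is uniformly bounded.

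The ratio in the statement then becomes $R_n = -\,(\lambda_p^n A + \tau_p^n B_n)/(\lambda_p^{n-1} A + \tau_p^{n-1} B_{n-1})$, and a direct algebraic manipulation gives
\begin{equation*}
R_n + \lambda_p \;=\; \frac{\tau_p^{n-1}(\lambda_p B_{n-1} - \tau_p B_n)}{\lambda_p^{n-1} A + \tau_p^{n-1} B_{n-1}} \;=\; \frac{(\tau_p/\lambda_p)^{n-1}(\lambda_p B_{n-1} - \tau_p B_n)}{A + (\tau_p/\lambda_p)^{n-1} B_{n-1}}.
\end{equation*}
Since $|B_n|$ is bounded and $\tau_p/\lambda_p < 1$, the denominator converges to $A$ while the numerator is $O((\tau_p/\lambda_p)^{n-1})$, which is the same as $O((\tau_p/\lambda_p)^n)$ up to the constant factor $\lambda_p/\tau_p$.

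The one point requiring justification is that $A \neq 0$, so that the denominators above behave as claimed and the ratio is well defined for all large $n$. This is precisely where the hypothesis $f' > 0$ enters: since $\scrl_p$ is a positive linear functional, $f' > 0$ forces $\scrl_p(f') > 0$, and since $\psi_p > 0$ on $I$ by Theorem \ref{thmeigen}, we get $A > 0$; moreover the denominator $\lambda_p^{n-1} A + \tau_p^{n-1} B_{n-1}$ has the same sign as $A$ for all sufficiently large $n$ (and in fact for all $n$, by possibly absorbing small-$n$ terms into the $O$-constant). This is essentially the only subtlety; the rest is the bookkeeping of the two scales $\lambda_p^n$ and $\tau_p^n$. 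The final remark about $\tau_p/\lambda_p < 189/198$ for $p \ge 2$ is then immediate from (\ref{esttau2}).
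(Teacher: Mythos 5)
Your proof is correct and follows essentially the same route as the paper, which likewise applies the decomposition of Theorem \ref{thmeigen} to $f'$ and integrates over $I$ (your version supplies the bookkeeping the paper leaves implicit, including the sign from $(\scru^n f)'=(-1)^n\scrv^n f'$). The only slight imprecision is the claim that positivity of $\scrl_p$ together with $f'>0$ \emph{forces} $\scrl_p(f')>0$ --- a positive functional can vanish on a strictly positive function in general --- but here it is immediate since $f'\ge c\,\psi_p$ for some $c>0$ by compactness and $\scrl_p(\psi_p)=1$ (apply the decomposition to $\psi_p$ itself), so $A=\scrl_p(f')\int_0^1\psi_p>0$ as you need.
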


\begin{proof}
By Theorem \ref{thmeigen},
$$(\scru^nf)'=\scrv^{n} f'=\lambda_p^n\scrl(f')\psi+\tau_p^n\osc(\frac{f'}{\psi})\kappa_n\psi.$$
Take the integrals on $I$. The result follows.
\end{proof}

Moreover, from Theorem \ref{thmeigen} we know that $1$ and $-\lambda_p$ are the first
two eigenvalues of $\scru$ (and also $\scrg_p$). Let us denote the $n$-th
eigenvalue of $\scrg_p$ by $\Lambda_p(n)$. A remarkable result by G. Alkauskas
\cite{GA}
shows that
\begin{equation}\label{gaest}
(-1)^{n-1}\Lambda_1(n)=\phi^{2n}+C\cdot\frac{\phi^{2n}}{\sqrt n}+d(n)\cdot\frac{\phi^{2n}}{n},
\end{equation}
where $\phi=\frac{\sqrt 5-1}{2}$, $C$ is a known constant and $d$ is a bounded function.
According to \cite{GA}, behind (\ref{gaest}) is the spectrum
of operator
$$\scrh_1f(x)=\frac1{(1+x)^2}f(\frac1{1+x}),$$
which is $\{(-1)^{n-1}\phi^{2n}:n\in\mathbb{N}\}$. It is natural to consider
$$\scrh_pf(x)=\frac p{(p+x)^2}f(\frac p{p+x}).$$
We conjecture that
\begin{conjecture} For every positive integer $p$,
$$\lim_{n\to\infty}\frac{\Lambda_p(n)}{(-1)^{n-1}p^{-n}\phi_p^{2n}}=1,$$
where $\phi_p=\cfrac{\sqrt{p^2+4p}-p}{2}$ is the positive root of the equation
$$x=\frac{p}{p+x}.$$
\end{conjecture}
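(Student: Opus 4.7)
The plan is to extend G.~Alkauskas's strategy for $p=1$ to general $p\geq 2$, building on the observation that among the branches $\scrh_{p,k}f(x)=\frac{p}{(k+x)^2}f(\frac{p}{k+x})$ appearing in the decomposition $\scrg_p=\sum_{k\geq p}\scrh_{p,k}$, the innermost branch $\scrh_{p,p}=\scrh_p$ has the largest fixed point and so should control the asymptotic behavior of the eigenvalues of $\scrg_p$. First I would compute the spectrum of $\scrh_p$ explicitly. From $\phi_p(p+\phi_p)=p$ one obtains $g_p'(\phi_p)=-\phi_p^2/p$ and the weight value $p/(p+\phi_p)^2=\phi_p^2/p$. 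Choosing a disk $D\supset I$ with $g_p(D)\Subset D$, on a suitable Hardy or Bergman space of functions holomorphic on $D$ the weighted composition operator $\scrh_p$ is nuclear, and its eigenvalues form the geometric progression $(\phi_p^2/p)(-\phi_p^2/p)^{n-1}=(-1)^{n-1}p^{-n}\phi_p^{2n}$ for $n\geq 1$, with eigenfunctions given by polynomials of increasing degree in a Koenigs coordinate centered at $\phi_p$.

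Next, I would realize $\scrg_p$ on the same function space. All inverse branches $g_{p,k}(x)=p/(k+x)$ for $k\geq p$ contract $D$ strictly, and their fixed points $\phi_{p,k}$, defined by $\phi_{p,k}(k+\phi_{p,k})=p$, satisfy $\phi_{p,k}<\phi_p$ for every $k>p$. A trace-norm bound using decay $\sim 1/k^2$ in the weights shows that $\scrg_p=\sum_{k\geq p}\scrh_{p,k}$ is itself nuclear. The Ruelle--Mayer trace formula
\[
\det(1-z\scrg_p)=\exp\!\left(-\sum_{n\geq 1}\frac{z^n}{n}\,\mathrm{tr}\,\scrg_p^n\right)
\]
expresses $\mathrm{tr}\,\scrg_p^n$ as a sum over periodic words $(k_1,\ldots,k_n)$ with $k_i\geq p$. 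The constant word $(p,\ldots,p)$ contributes the dominant geometric term $(-\phi_p^2/p)^n/(1-(\phi_p^2/p)^2)$, and every other periodic word has strictly smaller contraction rate. Tracking the reciprocal zeros of $\det(1-z\scrg_p)$ should then match $\Lambda_p(n)$, for each fixed index, asymptotically to the corresponding Koenigs eigenvalue of $\scrh_p$.

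The main obstacle is not bounding $\Lambda_p(n)$ by the right order of magnitude but pinning down the ratio limit sharply to $1$. Alkauskas's proof for $p=1$ uses a delicate asymptotic expansion producing lower-order corrections such as $\phi^{2n}/\sqrt{n}$, and relies on closed-form identities for certain Fredholm minors of $\scrg_1$ in a basis of polynomials around $\phi_1$. Adapting this to $p\geq 2$ amounts to replacing the Riemann-zeta tails specific to $p=1$ with Hurwitz-zeta tails $\sum_{k\geq p+1}(k+\cdot)^{-s}$ and showing that the contributions of non-leading periodic orbits are not only of smaller geometric rate but also of the right form not to shift the leading constant. I expect this to be the heart of the difficulty and to require either an explicit identity generalizing Alkauskas's or a careful perturbative spectral argument handling the multiple hyperbolic fixed points $\{\phi_{p,k}\}_{k\geq p}$ of comparable but distinct strengths simultaneously.
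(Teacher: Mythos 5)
This statement is posed in the paper as a \emph{conjecture}: the paper offers no proof of it, so the only question is whether your sketch actually establishes the limit, and it does not --- as you yourself concede in your closing paragraph. What you do correctly is the local computation: the fixed point $\phi_p$ of $x\mapsto p/(p+x)$ satisfies $\phi_p(p+\phi_p)=p$, the multiplier there is $-p/(p+\phi_p)^2=-\phi_p^2/p$ and the weight is $p/(p+\phi_p)^2=\phi_p^2/p$, so on a suitable space of holomorphic functions the single-branch operator $\scrh_p$ has simple eigenvalues $(\phi_p^2/p)(-\phi_p^2/p)^{n-1}=(-1)^{n-1}p^{-n}\phi_p^{2n}$. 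That identifies the conjectured asymptotic values, but it was already the heuristic behind the conjecture; the mathematical content lies entirely in passing from $\scrh_p$ to $\scrg_p=\sum_{k\ge p}\scrh_{p,k}$.

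Your trace-formula step does not accomplish that passage, because it conflates two different asymptotic regimes. The periodic-orbit expansion of $\mathrm{tr}\,\scrg_p^n=\sum_j\Lambda_p(j)^n$ as $n\to\infty$ is governed by the \emph{largest} eigenvalues (indeed by $\Lambda_p(1)=1$), whereas the conjecture concerns the decay of $\Lambda_p(n)$ as the \emph{index} $n\to\infty$, i.e.\ the tail of the eigenvalue sequence. Knowing that the constant word $(p,\dots,p)$ dominates each trace tells you nothing directly about that tail. To control it you must show that the perturbation $\sum_{k>p}\scrh_{p,k}$ --- itself an infinite-rank nuclear operator whose own eigenvalue sequence decays at the only slightly faster rate governed by the fixed point of $x\mapsto p/(p+1+x)$ --- shifts the $n$-th eigenvalue of $\scrh_p$ by $o(p^{-n}\phi_p^{2n})$, uniformly enough to preserve both the modulus and the alternating sign. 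Eigenvalues of non-self-adjoint compact operators are unstable under such perturbations: singular-value inequalities of the form $s_{m+n-1}(A+B)\le s_m(A)+s_n(B)$ give upper bounds of the right order of magnitude but no matching lower bounds and no ratio-tends-to-one statement. This is exactly the delicate part of Alkauskas's analysis at $p=1$ (which itself rests on identities special to that case), and your proposal defers it rather than resolving it. As it stands, the conjecture remains open.
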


\section*{Acknowledgments}
The author is supported by NSFC No. 11571387. 
 


\end{document}